\documentclass[11pt]{article}
\usepackage{amssymb}
\usepackage{graphicx}
\usepackage{enumerate}
\usepackage{multirow}
\usepackage{amsmath}

\textwidth = 444pt
\textheight = 620pt
\oddsidemargin = 20pt
\topmargin = 0pt

\newtheorem{proposition}{Proposition}[section]

\newtheorem{lemma}{Lemma}[section]
\newtheorem{theorem}{Theorem}[section]
\newtheorem{corollary}{Corollary}[section]

\newenvironment{proof}
{
\textbf{Proof.}
}
{
\hfill$\square$}

\def\R{\mbox{$\mathbb{R}$}}


\def\x{{\mbox{\boldmath $x$}}}

\def\sm{{\hspace*{-.5pt}{\setminus}\hspace*{-.5pt}}}

\def\vec0{{\mbox{\boldmath $0$}}}

\begin{document}

\title{Spectral characterizations of almost complete graphs}
\author{Marc C\'amara and Willem H. Haemers\thanks{corresponding author, e-mail: {\tt haemers@uvt.nl}}
\\[5pt]
{\small Tilburg University, The Netherlands}}
\date{}
\maketitle
\abstract{\noindent
We investigate when a complete graph $K_n$ with some edges deleted is determined by its adjacency spectrum.
It is shown to be the case if the deleted edges form a matching, a complete graph $K_m$ provided
$m \leq n-2$, or a complete bipartite graph.
If the edges of a path are deleted we prove that the graph is determined by its generalized spectrum
(that is, the spectrum together with the spectrum of the complement).
When at most five edges are deleted from $K_n$, there is just one pair of nonisomorphic cospectral graphs.
We construct nonisomorphic cospectral graphs (with cospectral complements) for all $n$ if six or more edges
are deleted from $K_n$, provided $n$ is big enough.
}
\section{Introduction}

Two graphs for which the adjacency matrices have the same spectrum are called {\em cospectral}.
A graph $G$ is {\em determined by its spectrum} ({\em DS} for short) if every graph cospectral with $G$
is isomorphic with $G$.
Spectral characterizations of graphs (with respect to various matrices) did get much attention
in the recent past; see \cite{DH03,DH09}.
It has been conjectured by the second author that almost all graphs are DS.
Truth of this conjecture would mean that the spectrum gives a useful fingerprint for a graph.
The paradox is that it is difficult to prove that a given graph is DS.
Not very many classes of graphs are known to be DS.
These include for example the path $P_n$ the cycle $C_n$ and the complete graph $K_n$.
A number of papers have appeared that prove spectral characterizations for more complicated cases.
Very often such graphs have relatively few edges, like T-shape trees and lollipop graphs (see \cite{WX06,BJ08}).
Not many results are known if $G$ has many edges, that is, the complement of $G$ has few edges.
If $G$ is regular, or if one considers the spectrum of the Laplacian matrix,
then a graph is DS if and only if the complement is.
However, with respect to the adjacency spectrum of a nonregular graph $G$ with few edges,
the characterization problem for the complement of $G$, is most of the time much harder than for $G$.
For example for the path $P_n$, there is a straightforward proof that $P_n$ is DS (see for example \cite{DH03}).
However, for the complement of $P_n$ the proof is rather involved (see \cite{DH02}).

If $H$ is a subgraph of a graph $G$, then the graph obtained from $G$ by deleting the edges of $H$ is
denoted by $G { \sm } H$.
In this paper the following graphs are proved to be DS:
$K_n{ \sm } \ell K_2$, $K_n{ \sm } K_m$ (provided $m\leq n-2$),
$K_n { \sm } K_{\ell,m}$ and $K_n { \sm } G$, when $G$ has at most four edges.
We show that there is exactly one pair of nonisomorphic cospectral graphs if five edges are deleted from $K_n$.
If six or more edges are deleted from $K_n$, one can obtain cospectral graphs for
every $n$ which is big enough.

The graph $G=P_\ell+(n-\ell)K_1$
has a nonisomorphic cospectral mate if $\ell$ is odd and $5\leq\ell\leq n-1$ (see \cite{CSS10}).
The complements of these cospectral graphs are not cospectral.
More generally, we prove that $G$ is determined by the generalized spectrum (which is the spectrum
of $G$ together with the spectrum of the complement).

\section{Removing a matching or a complete graph}

It is known that the adjacency spectrum determines the number of closed walks of any given
length $\ell$.
For $\ell=0$, $2$ and $3$ this implies that cospectral graphs have the same number of
vertices, edges and triangles, respectively.
Deleting one edge from $K_n$ destroys $n-2$ triangles, and deleting $m$ edges destroys
at most $m(n-2)$ triangles, with equality if and only if the deleted edges form a matching.
Therefore any graph with $n$ vertices, ${n \choose 2}-m$ edges and ${m\choose 3}-m(n-2)$ triangles
is $K_n{ \sm } mK_2$.
Thus we can conclude:
\begin{proposition}\label{match}
A graph obtained from $K_n$ by removing the edges of a matching is DS.
\end{proposition}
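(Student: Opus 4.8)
The plan is to use the three lowest-order spectral invariants. Since cospectral graphs share the number of closed walks of every length, they agree on the number of vertices (length $0$), edges (length $2$), and triangles (length $3$), as recalled just above. So I would begin by letting $H$ be an arbitrary graph cospectral with $G=K_n{\sm}mK_2$, where $m$ is the number of edges of the matching, and record that $H$ has $n$ vertices, ${n\choose 2}-m$ edges, and exactly as many triangles as $G$.

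Next I would use the edge count to fix the coarse structure of $H$. Since $H$ has $n$ vertices but only $m$ fewer edges than $K_n$, it must be $K_n$ with some set $F$ of $m$ edges removed; write $H=K_n{\sm}F$ with $F$ a graph on $m$ edges. The only remaining freedom is the internal structure of $F$, and this is precisely what the triangle count controls.

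The key step is the counting fact established before the statement: removing $m$ edges from $K_n$ destroys at most $m(n-2)$ triangles, with equality exactly when the removed edges form a matching. I would invoke this in both directions. On one hand $G=K_n{\sm}mK_2$ attains equality, so its number of triangles is the maximal value ${n\choose 3}-m(n-2)$. On the other hand $H=K_n{\sm}F$ is cospectral with $G$ and hence has the same number of triangles, forcing $F$ to attain equality as well; therefore $F$ is a matching of size $m$. Since any two matchings with the same number of edges on $n$ vertices differ only by a relabelling of the vertices, $H\cong K_n{\sm}mK_2=G$, as required.

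The only genuine content lies in the ``equality iff matching'' claim, so that is where I would expect the work to sit were it not already in hand: in $K_n$ each deleted edge lies in $n-2$ triangles, a triangle is destroyed as soon as one of its edges is removed, and the naive total $m(n-2)$ overcounts exactly those triangles that contain two removed edges---equivalently, those produced by two removed edges sharing a common vertex. Equality therefore holds iff no two removed edges meet, that is, iff they form a matching. As this is given, the proof reduces to the bookkeeping above.
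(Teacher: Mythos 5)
Your proof is correct and follows essentially the same route as the paper: use closed walks of lengths $0$, $2$, $3$ to pin down the number of vertices, edges and triangles, and then apply the fact that deleting $m$ edges from $K_n$ destroys at most $m(n-2)$ triangles with equality exactly for a matching. Your write-up merely makes explicit two points the paper leaves implicit, namely that any cospectral mate is itself $K_n$ minus some $m$-edge graph $F$, and the inclusion--exclusion justification of the equality case.
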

Suppose $G = K_n { \sm } K_m$ .
Then $G$ has adjacency matrix
\[
A=\left[\begin{array}{cc} O_m & J \\ J & J-I_{n-m}\end{array} \right]
\]
(as usual, $O$, $J$ and $I$ are the all-zero, all-one and identity matrix, respectively; indices indicate the order).
We see that rank$\, A = n-m+1$ and rank$(A+I) = m+1$, hence $A$ has an eigenvalue $0$
with multiplicity $m-1$ and an eigenvalue $-1$ with multiplicity $n-m-1$.
We shall prove that $G$ is DS provided $m\leq n-2$.
The first step in the proof seems interesting in its own right.
\begin{lemma}
If a graph $G$ has only one positive eigenvalue, then $G$ is a complete
multipartite graph, possibly extended with some isolated vertices.
\end{lemma}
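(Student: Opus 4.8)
The plan is to combine eigenvalue interlacing with a forbidden-induced-subgraph argument, and then to read off the structure from an equivalence relation. First I would record the only tool needed: if $H$ is an induced subgraph of $G$, then by Cauchy interlacing the second largest eigenvalue of $G$ is at least the second largest eigenvalue of $H$; consequently no induced subgraph of $G$ can have more positive eigenvalues than $G$ itself. Since $G$ has a single positive eigenvalue, every induced subgraph of $G$ has at most one positive eigenvalue. In particular $G$ contains no induced $2K_2$, no induced $P_4$, and no induced paw (a triangle with one pendant edge), because each of these three graphs has two positive eigenvalues: $2K_2$ has spectrum $\{1,1,-1,-1\}$, $P_4$ has eigenvalues $\frac{\pm1\pm\sqrt5}{2}$ of which two are positive, and the paw has characteristic polynomial $(x+1)(x^3-x^2-3x+1)$, whose positive roots lie in $(0,1)$ and $(2,3)$.

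Next I would delete the isolated vertices of $G$, call the remaining graph $G'$, and show that $G'$ is complete multipartite. To this end I would define on $V(G')$ the relation $R$ in which $u\,R\,w$ means $u=w$ or $u\not\sim w$. This relation is reflexive and symmetric, so the whole content is transitivity. Suppose it fails, so that there are distinct $u,v,w$ with $u\not\sim v$, $v\not\sim w$, but $u\sim w$. Because $v$ is not isolated it has a neighbour $x$, and since $x\sim v$ while $u\not\sim v$ and $w\not\sim v$, the four vertices $u,v,w,x$ are distinct. I would then split into cases according to the two undetermined adjacencies $ux$ and $wx$: if $x$ is adjacent to neither $u$ nor $w$, the induced subgraph on $\{u,w,v,x\}$ is $2K_2$; if $x$ is adjacent to exactly one of them, it is the path $P_4$; and if $x$ is adjacent to both, it is the paw. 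Each possibility is excluded by the previous paragraph, so no such triple exists and $R$ is transitive.

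Finally I would translate the equivalence relation into graph structure. The classes of $R$ are sets of mutually nonadjacent vertices, while any two vertices lying in distinct classes are adjacent; this says precisely that $G'$ is a complete multipartite graph whose parts are the classes of $R$. Restoring the isolated vertices removed at the start then exhibits $G$ as a complete multipartite graph together with some isolated vertices, as claimed.

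I would expect the case analysis for transitivity to be the crux, and within it the paw configuration to be the only genuinely new case, since $2K_2$ and $P_4$ are the familiar obstructions to having a single positive eigenvalue. Verifying that the paw likewise has two positive eigenvalues is the one small computation that cannot be bypassed, because the paw contains neither $2K_2$ nor $P_4$ as an induced subgraph and so is not caught by the two standard obstructions.
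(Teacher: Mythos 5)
Your proposal is correct and follows essentially the same route as the paper: both arguments use Cauchy interlacing to forbid the induced subgraphs $2K_2$, $P_4$, and the paw, via the identical configuration (an edge, a vertex nonadjacent to both its endpoints, and a neighbour of that vertex), and then read off the complete multipartite structure from the resulting transitivity of nonadjacency among non-isolated vertices. The only difference is cosmetic: you verify the spectra of the three forbidden graphs explicitly, where the paper merely asserts that each has a positive second eigenvalue.
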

\begin{proof}
Suppose $G$ has $F=K_2+K_1$ as an induced subgraph.
Let $x$ be the isolated vertex in $F$.
Assume $x$ is not isolated in $G$, then $x$ is adjacent to some vertex $y$ of $G$ outside $F$.
The vertices of $F$ together with $y$ induce a subgraph of $G$ on four vertices
containing two disjoint edges.
There are just three such graphs: $2K_2$, $P_4$, and a triangle with one pendant edge.
All three have a positive second eigenvalue which contradicts the interlacing inequalities.
Therefore $F$ is not an induced subgraph of $G$, and therefore any two nonadjacent vertices of
$G$ have the same neighbors, which proves the claim.
\end{proof}
\begin{theorem}\label{complete}
If $m\leq n-2$, then $K_n { \sm } K_m$ is DS.
\end{theorem}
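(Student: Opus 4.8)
The plan is to begin with the spectrum of $G=K_n\sm K_m$ itself. Using the equitable partition of $A$ into the $m$ mutually nonadjacent vertices and the remaining $n-m$ vertices, the quotient matrix has characteristic polynomial $\lambda^2-(n-m-1)\lambda-m(n-m)$; its two roots, one positive and one negative, together with the eigenvalue $0$ of multiplicity $m-1$ and the eigenvalue $-1$ of multiplicity $n-m-1$ already found, account for all $n$ eigenvalues. In particular $G$ has exactly one positive eigenvalue. Since the inertia is determined by the spectrum, any graph $H$ cospectral with $G$ also has a single positive eigenvalue, so by the Lemma $H=K_{n_1,\dots,n_k}\cup tK_1$ is a complete multipartite graph together with $t\ge0$ isolated vertices. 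The whole proof then consists of showing that the spectrum forces the part sizes to be $n-m$ ones and a single $m$, with $t=0$.

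To read off the parts I would use that for a complete multipartite graph all the nonzero eigenvalues come from the quotient matrix $Q$ on its $k$ parts, while the within-part vectors and the isolated vertices contribute only the eigenvalue $0$. A short determinant computation gives $\det Q=(-1)^k(1-k)\prod_i n_i\neq0$ for $k\ge2$, so $0$ has multiplicity exactly $n-k$ in $H$; comparing with $G$ yields $k=n-m+1$. Likewise $-1$ can occur only among the eigenvalues of $Q$, and computing the nullity of $Q+I$ shows that its multiplicity equals $s-1$, where $s$ is the number of parts of size $1$ (all the corresponding rows of $Q+I$ are equal). Matching the multiplicity $n-m-1$ of $-1$ in $G$ then gives $s=n-m$, so $H$ has $n-m$ singleton parts and exactly one part of size $a\ge2$; that is, $H=(K_{n-m+a}\sm K_a)\cup tK_1$ with $a+t=m$.

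I expect the eigenvalue $-1$ to be the crux of the argument, and it is exactly here that the hypothesis $m\le n-2$ is needed. The multiplicity of $-1$ in $H$ is $s-1$ when $s\ge1$ but drops to $0$ when $s=0$ (no singleton parts), so the equation "multiplicity $=s-1$" only pins down $s$ when the target multiplicity $n-m-1$ is positive, i.e.\ when $m\le n-2$. For $m=n-1$ the graph $G$ is the star $K_{1,n-1}$, whose $-1$-multiplicity is $0$, and then complete bipartite graphs such as $K_{a,b}\cup tK_1$ with $ab=n-1$ are genuine nonisomorphic cospectral mates; so the restriction cannot be dropped.

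It remains to eliminate the isolated vertices. Having fixed the multiplicities of $0$ and $-1$, the graphs $G$ and $H$ share those eigenvalues and are left with exactly two further eigenvalues each, which must coincide as a multiset. For $H$ these are the two nontrivial quotient eigenvalues of its multipartite part $K_{n-m+a}\sm K_a$, the roots of $\lambda^2-(n-m-1)\lambda-a(n-m)$. Their product $-a(n-m)$ must equal the product $-m(n-m)$ for $G$, and since $n-m\ge2$ this forces $a=m$, hence $t=0$ and $H=G$.
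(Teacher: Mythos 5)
Your proposal is correct, and it shares with the paper only its opening move: both invoke the lemma that a graph with exactly one positive eigenvalue is a complete multipartite graph plus isolated vertices (using that inertia is a spectral invariant). After that the two arguments genuinely diverge. The paper proceeds via forbidden subgraphs: interlacing with $K_{2,2,1}$ (whose two smallest eigenvalues lie below $-1$, while the cospectral graph has only one such eigenvalue) shows the multipartite part is either complete bipartite — ruled out since it would lack the eigenvalue $-1$, impossible when $m\leq n-2$ — or of the form $K_{n'}\sm K_{m'}$, and then the multiplicity of $-1$ plus an edge count finish. You instead work algebraically with the quotient matrix $Q$ of the multipartite part: the formula $\det Q=(-1)^k(1-k)\prod_i n_i$ recovers the number of parts $k=n-m+1$ from the multiplicity of $0$; the nullity of $Q+I$ (equal to $s-1$ when there are $s\geq 1$ singleton parts, and $0$ when $s=0$) recovers $s=n-m$ from the multiplicity of $-1$; and the product of the two remaining eigenvalues replaces the paper's edge count to force $a=m$ and eliminate isolated vertices. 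Your route is more computational, but it buys two things: it isolates exactly where the hypothesis $m\leq n-2$ enters (the relation ``multiplicity of $-1$ equals $s-1$'' only determines $s$ when that multiplicity is positive), and it treats the possible isolated vertices explicitly rather than implicitly. One shared, harmless caveat: when $m=1$ a root of $\lambda^2-(n-m-1)\lambda-m(n-m)$ equals $-1$, so the stated $-1$-multiplicities need adjusting; but then $G=K_n$, which is trivially DS, so neither proof is affected.
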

\begin{proof}
Let $G$ be a graph cospectral with $K_n{ \sm } K_m$.
By the above lemma, $G$ consist of a complete multipartite graph $G'$ and possibly some isolated vertices.
The two smallest eigenvalues of the complete tripartite graph $K_{2,2,1}$ are $-2$ and $1-\sqrt{5}$.
Both of these values are less than $-1$, and eigenvalue interlacing implies that $K_{2,2,1}$ is not an
induced subgraph of $G'$.
Therefore $G'$ is a complete bipartite graph or $G'=K_{n'} { \sm } K_{m'}$ where $m'\leq n'-2$.
In the first case $G'$ has no eigenvalue $-1$, so $n-m-1=0$ which was excluded.
In the second case, the eigenvalue $-1$ has multiplicity $n-m-1$ in $G$ and multiplicity $n'-m'-1$ in $G'$,
so $n-m=n'-m'$.
Moreover, $G$ and $G'$ have the same number of edges, hence
$(n-m)(n-1)/2+m(n-m) = (n'-m')(n'-1)/2 + m'(n'-m')$.
Therefore $G = G' = K_n  \sm  K_m$.
\end{proof}
\\[5pt]
Note that, if $m = n-1$ the result need not be true.
Then $K_n  \sm  K_m = K_{1,n-1}$, and if $\ell$ divides $n-1$, then
$K_{1,n-1}$ is cospectral with $K_{\ell,k}+(n-\ell-k)K_1$, where $k=(n-1)/\ell$.

\section{The multiplicity of $-1$}

The complete graph $K_n$ has an eigenvalue $-1$ with multiplicity $n-1$.
If a few edges are deleted from $K_n$ then there will still be an eigenvalue $-1$ with
large multiplicity.
In this section we deal with graphs having the eigenvalue $-1$ with multiplicity at least $n-3$.
Clearly $K_n$ is the only graph for which the multiplicity of $-1$ is $n-1$.
\begin{proposition}
Let $G$ be a graph on $n$ vertices having an eigenvalue $-1$ with multiplicity $n-2$.
Then $G$ is the disjoint union of two complete graphs, and therefore $G$ is DS.
\end{proposition}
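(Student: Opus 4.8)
The plan is to convert the spectral hypothesis into a rank statement and then recover the structure of $G$ from forbidden induced subgraphs on three vertices. Writing $A$ for the adjacency matrix of $G$, the hypothesis says that $-1$ is an eigenvalue of $A$ of multiplicity $n-2$, which is the same as $\mathrm{rank}(A+I)=2$. Here $A+I$ is the symmetric $0/1$ matrix with $(A+I)_{ij}=1$ exactly when $i=j$ or $i\sim j$, and for any three vertices $\{i,j,k\}$ the corresponding principal $3\times 3$ submatrix is $A'+I_3$, where $A'$ is the adjacency matrix of the subgraph induced on $\{i,j,k\}$. The goal is to show $G=K_a\cup K_b$ for some $a,b\geq 1$ with $a+b=n$, and then to deduce that $G$ is DS.

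First I would use that $\mathrm{rank}(A+I)=2$ forces every $3\times 3$ minor, in particular every principal one, to vanish. A short computation of $\det(A'+I_3)$ over the four induced subgraphs on three vertices gives $\det=1$ for $3K_1$, $\det=-1$ for the path $P_3$, and $\det=0$ for the single edge $K_2\cup K_1$ and for the triangle $K_3$. Hence the rank condition rules out both an induced $3K_1$ and an induced $P_3$. The absence of an induced $P_3$ is exactly the statement that $G$ is a disjoint union of complete graphs: if some component were not a clique it would contain two nonadjacent vertices, and the first three vertices of a shortest path between them would induce a $P_3$.

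Once $G$ is known to be a disjoint union of cliques $K_{n_1}\cup\cdots\cup K_{n_t}$, the matrix $A+I$ is block diagonal with blocks $J_{n_1},\dots,J_{n_t}$, each of rank $1$, so $\mathrm{rank}(A+I)=t$; thus $t=2$ and $G=K_a\cup K_b$ with $a,b\geq 1$ (the same bound $t\le 2$ follows independently from the forbidden $3K_1$). For the DS claim I would then take any graph $\widetilde G$ cospectral with $G$: it has the same multiplicity $n-2$ of the eigenvalue $-1$, so by what precedes $\widetilde G=K_{a'}\cup K_{b'}$. The two eigenvalues of $G$ different from $-1$ are $a-1$ and $b-1$ (both $\geq 0$), and likewise $a'-1,b'-1$ for $\widetilde G$, so cospectrality forces $\{a-1,b-1\}=\{a'-1,b'-1\}$, hence $\{a,b\}=\{a',b'\}$ and $\widetilde G\cong G$.

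The computations are routine; the crux is the reduction in the first two paragraphs, namely recognizing that $\mathrm{rank}(A+I)=2$ is equivalent to the vanishing of all $3\times 3$ minors and that the only relevant nonsingular three-vertex configuration is $P_3$, which is precisely what makes $G$ a disjoint union of cliques. After that, counting the components through the rank and matching the two remaining eigenvalues is immediate, so I expect no genuine obstacle beyond bookkeeping.
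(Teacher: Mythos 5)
Your proof is correct, and it reaches the structure of $G$ by a genuinely different route than the paper. The paper also begins by translating the hypothesis into $\mathrm{rank}(A+I)=2$, but then argues directly on rows: since $G\neq K_n$ there are two rows of $A+I$ indexed by nonadjacent vertices; these two rows are linearly independent, so every other row is a linear combination of them, and checking which $0/1$ combinations are consistent with the diagonal shows each row duplicates one of the two chosen rows, i.e.\ $G$ is a disjoint union of two cliques. You instead extract the structure through forbidden induced subgraphs: rank $2$ forces every $3\times 3$ principal minor of $A+I$ to vanish, and since $\det(A'+I_3)\neq 0$ exactly for the three-vertex graphs $3K_1$ and $P_3$, the graph is $P_3$-free, hence a disjoint union of cliques, with the rank (or the forbidden $3K_1$) pinning the number of cliques at two. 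Your computations check out ($\det=1$ for $3K_1$, $-1$ for $P_3$, $0$ for $K_2+K_1$ and $K_3$), and the $P_3$-free characterization via shortest paths is standard. What your version buys is that it replaces the paper's ``it follows straightforwardly'' with a concrete, checkable criterion, and it is closer in spirit to the techniques the paper uses elsewhere (the forbidden-subgraph interlacing argument of its Lemma on one positive eigenvalue, and the principal-submatrix analysis in the multiplicity-$(n-3)$ theorem); the paper's row-dependence argument is shorter but leaves more verification to the reader. The final DS step --- recovering $\{a,b\}$ from the remaining eigenvalues $a-1$, $b-1$ --- is the same in both, though you spell it out where the paper only asserts it.
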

\begin{proof}
Suppose that $G$ has eigenvalue $-1$ with multiplicity $n-2$.
Then $A+I$ has rank 2.
Since $G\neq K_n$, we may assume that the first two rows of $A+I$ correspond to nonadjacent vertices.
Clearly these rows are independent and, since rank$(A+I)=2$, all rows of $A+I$ are linear
combination of the first two rows.
Then it follows straightforwardly that $\Gamma$ is the disjoint union of two complete graphs.
Clearly, the spectrum determines the order of each of the complete graphs.
\end{proof}
\\[5pt]
The following theorem is an unpublished result by Van~Dam, Haemers and Stevanovi\'c.
\begin{theorem}
Let $G$ be a graph with $n$ vertices having an eigenvalue $-1$ with multiplicity $n-3$.
Then $G = K_n \sm  K_{\ell,m}$, where $\ell,m \geq 1$, $\ell+m \leq n-1$,
or $G=K_k+K_\ell+K_m$, where $k,\ell,m \geq 1$, $k+\ell+m=n$.
\end{theorem}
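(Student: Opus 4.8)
The plan is to reformulate the spectral hypothesis as the rank condition $\mathrm{rank}(A+I)=3$, where $A$ is the adjacency matrix of $G$ (the multiplicity of $-1$ being $n-3$ is exactly this), and then to classify the $0$--$1$ symmetric matrices with unit diagonal realising it. The useful viewpoint is that $A+I$ is the \emph{closed-neighbourhood matrix}: its $v$-th row is the indicator of $N[v]=N(v)\cup\{v\}$. Low rank forces very few ``types'' of row, and vertices whose rows coincide are adjacent true twins that will form cliques; rather than isolate this as a separate blow-up lemma, I would produce these classes directly from the rank bound in the step below.

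The key leverage comes from the standard fact that a real symmetric matrix of rank $3$ has a nonsingular $3\times3$ \emph{principal} submatrix. So I would choose vertices $a,b,c$ for which $B':=(A+I)[\{a,b,c\}]$ is nonsingular; inspecting the four graphs on three vertices shows $B'$ is nonsingular precisely when $\{a,b,c\}$ induces $\overline{K_3}$ or $P_3$ (the cases $K_2\sm$-type and $K_3$ give $\det B'=0$). Since rows $a,b,c$ then span the entire row space of $A+I$, the map sending a column of $A+I$ to its three entries in rows $a,b,c$ is a bijection on the column space. Hence every vertex $v$ is completely determined by the triple $\tau(v)=\big((A+I)_{av},(A+I)_{bv},(A+I)_{cv}\big)\in\{0,1\}^3$ recording its closed-neighbourhood relation to $a,b,c$, and expanding each column in the basis of columns $a,b,c$ yields the clean identity $(A+I)_{uv}=\tau(u)^{\top}(B')^{-1}\tau(v)$.

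The heart of the argument, and the step I expect to be the main obstacle to phrase cleanly, is to exploit the diagonal constraint $(A+I)_{vv}=1$, i.e.\ $\tau(v)^{\top}(B')^{-1}\tau(v)=1$, to prune the admissible triples. Evaluating this quadratic form on all eight elements of $\{0,1\}^3$ in each of the two cases, I expect exactly three of them to give the value $1$, namely $\tau(a),\tau(b),\tau(c)$ themselves. Every vertex therefore shares its row with one of $a,b,c$, so the vertices split into exactly three true-twin classes $V_a,V_b,V_c$ (each nonempty, since it contains one of $a,b,c$). Within a class $(A+I)_{uu'}=B'_{ii}=1$, so each class is a clique, and for $u\in V_i,\,v\in V_j$ the same formula gives $(A+I)_{uv}=B'_{ij}$, so the adjacency between two classes is complete or empty according to the corresponding entry of $B'$.

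It then remains to read off the two families. In the $\overline{K_3}$ case $B'=I_3$, so all three classes are pairwise non-adjacent cliques and $G=K_k+K_\ell+K_m$ with $k,\ell,m\ge1$ summing to $n$. In the $P_3$ case (centre $b$) the classes $V_a,V_c$ are non-adjacent while each is completely joined to $V_b$, which is exactly $K_n\sm K_{\ell,m}$ with $\ell=|V_a|$, $m=|V_c|$; the nonempty middle class $V_b$ forces $\ell+m\le n-1$, giving the stated ranges.
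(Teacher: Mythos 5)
Your proposal is correct and is essentially the paper's own proof: the paper likewise takes a nonsingular $3\times 3$ principal submatrix $A_1$ of $A+I$ (observing the only two possibilities are $I$ and the closed-neighbourhood matrix of $P_3$), uses the rank-3 identity $A_2=X^\top A_1^{-1}X$ --- your $(A+I)_{uv}=\tau(u)^\top (B')^{-1}\tau(v)$ --- and prunes columns via the unit-diagonal constraint $\x^\top A_1^{-1}\x=1$, which in each case leaves exactly the three columns of $A_1$, as you predicted. Your twin-class phrasing is just a repackaging of the paper's block-matrix reconstruction of $A+I$.
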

\begin{proof}
In this case, $A+I$ is a symmetric matrix with rank~$3$ and we can assume that
\[
A+I=\left[\begin{array}{cc}
A_1 & X\\X^\top &  A_2
\end{array}\right],\ {\rm where}\ A_1 {\ \rm is \ a \ nonsingular \ } 3\times 3 {\rm \ matrix.}
\]
From ${\rm rank}(A+I) = {\rm rank\,}A_1$ it follows that $A_2=X^\top A_1^{-1}X$.
In particular, each column $\x$ of $X$ satisfies $\x^\top A_1^{-1} \x = 1$.
There are only two cases for which ${\rm rank\,}A_1 =3$: when $A_1=I$,
and when
\[
A_1=\left[\begin{array}{ccc}
1 & 1 & 1\\1 & 1 & 0\\1 & 0 & 1
\end{array}\right], {\rm\ in\ which\ case\ }
A_1^{-1} = \left[\begin{array}{rrr}
-1 & 1 & 1\\1 & 0 & -1\\1 & -1 & 0
\end{array}\right].
\]
In the first case the only possible columns of $X$ are the three unit vectors,
and in the second case the possible columns of $X$ are
$[\, 1\ 1\ 1\, ]^\top ,\ [\, 1\ 1\ 0\, ]^\top ,\ [\, 1\ 0\ 1\, ]^\top$.
With $A_2=X^\top A_1^{-1}X$ this leads to the following two possibilities for $A+I$:
\[
A+I=\left[\begin{array}{ccc}
J_k & O & O\\O & J_\ell & O\\O & O & J_m
\end{array}\right],\ {\rm or}\
A+I=\left[\begin{array}{ccc}
J_k & J & J\\J & J_\ell & O\\J & O & J_m
\end{array}\right].
\]
\\[-37pt]
\phantom{x}\end{proof}
\\[5pt]
\begin{corollary}\label{completebipartite}
$K_n  \sm  K_{\ell,m}$ is DS.
\end{corollary}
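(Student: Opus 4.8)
The plan is to combine the classification theorem just proved with two spectral invariants: the least eigenvalue, which rules out the disjoint-union alternative, and the three ``nontrivial'' eigenvalues, which recover the parameters $\ell,m$. Throughout write the vertex set of $G=K_n\sm K_{\ell,m}$ as $L\cup M\cup R$ with $|L|=\ell$, $|M|=m$ and $|R|=k:=n-\ell-m\geq 1$ (the case $\ell+m=n$ gives $K_\ell+K_m$, already covered by the earlier proposition on multiplicity $n-2$).

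First I would verify that $G$ has eigenvalue $-1$ with multiplicity exactly $n-3$. Indeed $A+I$ has only three distinct rows, namely $\mathbf{1}$ (for vertices of $R$), $\mathbf{1}_{L\cup R}$ (for vertices of $L$) and $\mathbf{1}_{M\cup R}$ (for vertices of $M$); since $\ell,m,k\geq 1$ these three vectors are independent, so ${\rm rank}(A+I)=3$ and the multiplicity of $-1$ equals $n-3$. Any graph $H$ cospectral with $G$ then has the same property, so by the theorem $H$ is either $K_n\sm K_{\ell',m'}$ or a disjoint union $K_{k'}+K_{\ell'}+K_{m'}$.

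The key step is to discard the disjoint-union alternative. Every disjoint union of complete graphs has all its eigenvalues at least $-1$. On the other hand $G$ contains an induced $P_3$: take nonadjacent $u\in L$ and $w\in M$ together with a common neighbour $r\in R$, which exists because $k\geq 1$. By eigenvalue interlacing the least eigenvalue of $G$ is at most $\lambda_{\min}(P_3)=-\sqrt{2}<-1$. Hence $G$ cannot be cospectral with a disjoint union of complete graphs, and we conclude $H=K_n\sm K_{\ell',m'}$.

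It remains to pin down $\{\ell,m\}$ and $k$ from the spectrum. The partition $\{L,M,R\}$ is equitable, and the internal eigenvectors (those summing to zero on one cell and vanishing off it) account for all $n-3$ copies of $-1$; the three remaining eigenvalues are therefore those of the $3\times 3$ quotient matrix $B$. Thus the spectrum determines the characteristic polynomial of $B$, and a short computation shows that its linear coefficient equals $\ell m-2n+3$ while its constant term equals $\ell m\,(k+1)-n+1$. The first recovers the product $\ell m$, and then the second recovers $k$, hence the sum $\ell+m=n-k$; since a pair is determined by its sum and product, $\{\ell',m'\}=\{\ell,m\}$ and $H\cong G$. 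I expect the disjoint-union step to be the real crux, the point where the two families must genuinely be separated; it is clean once one notices the induced $P_3$. The final parameter extraction is routine linear algebra on the quotient matrix, equivalent to comparing the numbers of edges and triangles of the two candidates.
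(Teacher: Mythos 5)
Your proof is correct, and its skeleton matches the paper's: both handle the case $\ell+m=n$ first, invoke the preceding classification theorem for graphs with eigenvalue $-1$ of multiplicity $n-3$, discard the disjoint-union alternative, and then recover the parameters from spectral data. The differences lie in how the two sub-steps are carried out. To eliminate $K_{k'}+K_{\ell'}+K_{m'}$, the paper simply cites the fact that a disjoint union of complete graphs is DS (from van Dam--Haemers), so $G$ would have to be such a union, a contradiction; you instead give a self-contained argument: disjoint unions of complete graphs have least eigenvalue at least $-1$, while $G$ contains an induced $P_3$ (nonadjacent $u\in L$, $w\in M$ with common neighbour in $R$), so by interlacing its least eigenvalue is at most $-\sqrt{2}<-1$. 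This avoids the external citation and is arguably the cleaner route. For the parameter recovery, the paper compares the numbers of edges and triangles of the two candidates, while you extract the same information from the characteristic polynomial of the $3\times 3$ quotient matrix of the equitable partition $\{L,M,R\}$; your coefficients $\ell m-2n+3$ and $\ell m(k+1)-n+1$ are correct, and they do determine first $\ell m$, then $k$, hence $\ell+m=n-k$ and therefore $\{\ell,m\}$, exactly as you say. As you yourself observe, this step is equivalent to the paper's edge-and-triangle count (which, incidentally, contains typographical slips in the published proof), so the only genuine methodological difference is your interlacing argument, which trades a citation for an elementary two-line computation.
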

\begin{proof}
Put $G=K_n  \sm  K_{\ell,m}$.
If $\ell+m = n$ then $G = K_\ell+K_m$ which is DS.
Assume $k = n-\ell-m>0$, and suppose $G'$ is cospectral with $G$.
Then by the above theorem $G'$ is the disjoint union of three complete graphs,
or $G'=K_n \sm  K_{\ell',m'}$.
But the disjoint union of complete graphs is DS (see \cite{DH03}), therefore
$G'=K_n \sm  K_{\ell',m'}$.
Since $G$ and $G'$ have the same number of edges and triangles we find
$k\ell=k'\ell'$ and $k\ell(2n-k-\ell-2)/2=k'\ell'(2n-k'-\ell'-2)/2$,
hence $k=k'$ and $\ell=\ell'$.
\end{proof}
\\[5pt]
If the multiplicity of $-1$ is $n-4$, then there exist nonisomorphic cospectral graphs.
We'll present an example in the next section.

\section{Removing at most five edges}\label{sec: delete few}

Since we consider complements of graphs with few edges, and therefore few triangles the following relation
between the number of triangles of a graph and that of its complement is useful (see, for instance \cite{DH02}).
\begin{lemma}\label{lem: triangles in complement}
Let $G$ be a graph with $n$ vertices, $m$ edges, $t$ triangles, and degree sequence $d_1,d_2,\ldots,d_n$.
Let $\overline{t}$ be the number of triangles in the complement of $G$.
Then
$$
\overline{t}={n \choose 3}- (n-1)m + \frac{1}{2}\sum_{i=1}^n d_i^2-t \, .
$$
\end{lemma}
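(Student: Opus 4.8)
The plan is to partition the ${n \choose 3}$ vertex-triples of $G$ according to how many of their three pairs are edges of $G$, and to recover $\overline t$ from three elementary counting identities. For a triple $T=\{u,v,w\}$ let $e(T)\in\{0,1,2,3\}$ be the number of edges of $G$ inside $T$, and for $j=0,1,2,3$ let $n_j$ be the number of triples with $e(T)=j$. A triple is a triangle of $G$ exactly when $e(T)=3$, and it is a triangle of $\overline G$ exactly when $e(T)=0$ (all three complementary pairs are then edges). Hence $t=n_3$ and $\overline t=n_0$, so the whole problem reduces to expressing $n_0$ through $n$, $m$, $t$, and the degrees.

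First I would record the obvious identity $n_0+n_1+n_2+n_3={n\choose 3}$. Next, counting incidences between edges of $G$ and the triples containing them (each edge lies in $n-2$ triples) gives $\sum_T e(T)=n_1+2n_2+3n_3=(n-2)m$. The key third identity counts cherries: since any two edges lying inside a common triple must share a vertex, the quantity $\sum_T {e(T)\choose 2}=n_2+3n_3$ equals the number of paths of length two in $G$, which is $\sum_i{d_i\choose 2}$ (a cherry centred at vertex $i$ is a choice of two of its $d_i$ neighbours, and each cherry determines a unique triple).

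I would then simply solve this linear system for $n_0$. From the cherry identity, $n_2=\sum_i{d_i\choose 2}-3t$; substituting into the edge-incidence identity yields $n_1$; and subtracting $n_1+n_2+n_3$ from ${n\choose 3}$ gives $n_0={n\choose 3}-(n-2)m+\sum_i{d_i\choose 2}-t$. Finally, using $\sum_i d_i=2m$ to rewrite $\sum_i{d_i\choose 2}=\tfrac12\sum_i d_i^2-m$ collapses the two terms linear in $m$ into $-(n-1)m$ and produces the claimed formula for $\overline t$.

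The argument is essentially bookkeeping, so there is no deep obstacle; the one step requiring genuine care is the cherry identity, where I must verify that $\sum_T{e(T)\choose 2}$ counts each path of length two exactly once, i.e.\ that within a three-vertex triple every unordered pair of present edges is a distinct path $P_3$ and that distinct cherries live in distinct triples. Once that combinatorial correspondence is pinned down, the remaining manipulation is routine.
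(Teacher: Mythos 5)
Your proof is correct, but note that the paper does not actually prove this lemma: it is quoted as a known result with a pointer to \cite{DH02}, so there is no in-paper argument to compare against, and your write-up supplies a complete, self-contained derivation. The three identities you set up do all the work: writing $n_j$ for the number of vertex-triples containing exactly $j$ edges of $G$, you have $\sum_j n_j = \binom{n}{3}$, $\;n_1+2n_2+3n_3=(n-2)m$ (each edge lies in $n-2$ triples), and $n_2+3n_3=\sum_i\binom{d_i}{2}$, the last because two edges inside a $3$-set necessarily share a vertex, so pairs of coexisting edges in a triple correspond bijectively to paths of length two, counted by their center vertex. The cleanest way to finish (equivalent to your elimination) is $n_1+n_2+n_3=\bigl(n_1+2n_2+3n_3\bigr)-\bigl(n_2+3n_3\bigr)+n_3=(n-2)m-\sum_i\binom{d_i}{2}+t$, whence $\overline{t}=n_0=\binom{n}{3}-(n-2)m+\sum_i\binom{d_i}{2}-t$, and $\sum_i\binom{d_i}{2}=\frac12\sum_i d_i^2-m$ collapses this to the stated formula. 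Your one flagged point of care, the bijection between cherries and edge-pairs-within-a-triple, is indeed sound for the reason you give (distinct cherries span distinct triples or distinct edge-pairs within the same triple). Your argument is also in the same inclusion--exclusion spirit as the paper's own proof of the companion result on $4$-walks (Lemma~\ref{lem: 4-walks in the complement}), so it fits naturally as the missing proof.
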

%
For closed walks of length 4 (for short: $4$-walks) things become more complicated.
\begin{lemma}\label{lem: 4-walks in the complement}
The number of $4$-walks in the complement of a graph $G$ only depends on the number of vertices and edges of $G$,
and the number of different subgraphs (not necessarily induced) in $G$ isomorphic to $P_3$, $K_2+K_2$, $P_4$ and $C_4$.
More precisely, if these numbers are $n$, $m$, $m_1$, $m_2$, $m_3$ and $m_4$, and $W_n=(n-1)^4+n-1$
is the number of $4$-walks in $K_n$, then the number of $4$-walks in the complement of $G$ equals
\[
W_n-(8n^2-32n+34)m+(8n-20)m_1+16m_2-8m_3+8m_4 \, .
\]
\end{lemma}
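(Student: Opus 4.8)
The number of closed $4$-walks of any graph $H$ equals $\mathrm{tr}(A_H^4)$, so the plan is to compute $\mathrm{tr}(\overline{A}^4)$, where $A$ is the adjacency matrix of $G$ and $\overline{A}=J-I-A$ is that of the complement. Writing $B=J-I$ (the adjacency matrix of $K_n$) and expanding $(B-A)^4$ by multilinearity gives $2^4=16$ words in $\{B,-A\}$; grouping them by the number of factors equal to $A$ and using the cyclic invariance of the trace collapses these into six terms:
\[
\mathrm{tr}(\overline{A}^4)=\mathrm{tr}(B^4)-4\,\mathrm{tr}(AB^3)+4\,\mathrm{tr}(A^2B^2)+2\,\mathrm{tr}(ABAB)-4\,\mathrm{tr}(A^3B)+\mathrm{tr}(A^4).
\]
Here the two words with $A$ in opposite positions contribute $2\,\mathrm{tr}(ABAB)$ and the four with $A$'s cyclically adjacent contribute $4\,\mathrm{tr}(A^2B^2)$. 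The leading term is $\mathrm{tr}(B^4)=W_n$, the number of $4$-walks in $K_n$.

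The next step is to reduce each mixed trace to elementary graph quantities, using $J^2=nJ$, $J=\mathbf{1}\mathbf{1}^\top$, and the cyclic property. Since $B^3=(n^2-3n+3)J-I$ and $B^2=(n-2)J+I$, I would obtain $\mathrm{tr}(AB^3)=2(n^2-3n+3)m$ from $\mathbf{1}^\top A\mathbf{1}=2m$, and $\mathrm{tr}(A^2B^2)=(n-2)\sum_i d_i^2+2m$ from $\mathbf{1}^\top A^2\mathbf{1}=\sum_i d_i^2$. Expanding $ABAB$ with $J=\mathbf{1}\mathbf{1}^\top$ produces $\mathrm{tr}(AJAJ)=4m^2$ together with $-2\sum_i d_i^2+2m$. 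Finally $\mathrm{tr}(A^3B)=\mathbf{1}^\top A^3\mathbf{1}-\mathrm{tr}(A^3)$ is the number of open $3$-walks minus $6t$, and $\mathrm{tr}(A^4)$ is the number of closed $4$-walks in $G$ itself.

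It then remains to translate these quantities into the prescribed subgraph counts. The identity $\sum_i d_i^2=2m+2m_1$ follows from $d_i^2=d_i+2\binom{d_i}{2}$. Classifying closed $4$-walks through $\mathrm{tr}(A^4)=\mathrm{tr}\!\big((A^2)^2\big)=\sum_i d_i^2+\sum_{i\neq j}(A^2)_{ij}^2$, where the off-diagonal sum splits into the diagonal-of-$A^2$ contribution $2m_1$ and an ordered count $8m_4$ of $4$-cycles, gives $\mathrm{tr}(A^4)=2m+4m_1+8m_4$. A similar bookkeeping of open $3$-walks $\mathbf{1}^\top A^3\mathbf{1}$ by their coincidence patterns (back-and-forth on one edge, two cherries, and triangles) yields $2m+4m_1+6t+2m_3$, so the $6t$ cancels against $\mathrm{tr}(A^3)$ and no triangle term survives. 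The last ingredient is the relation $m^2=m+2m_1+2m_2$, obtained by counting ordered pairs of edges according to whether they coincide, share a vertex, or are disjoint; this eliminates the $4m^2$ coming from $\mathrm{tr}(ABAB)$ in favour of $m,m_1,m_2$. Substituting everything back and collecting coefficients should give exactly the stated expression, with the absence of $t$ and of $m^2$ serving as a built-in consistency check.

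The conceptual work is light, but the main obstacle will be the careful classification of the degenerate $3$- and $4$-walks and the exact bookkeeping of the coefficients across the six traces; a single sign or multiplicity error in the walk enumeration propagates into a wrong coefficient. I expect the cleanest way to control this is to verify the derived $3$-walk and $4$-walk formulas, as well as the final expression, against $K_n$ (where all counts and $W_n$ are known in closed form), which pins down every coefficient.
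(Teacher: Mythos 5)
Your proof is correct, but it takes a genuinely different route from the paper's. The paper argues combinatorially, by inclusion--exclusion over subsets $F\subseteq E(G)$: the number of $4$-walks of $K_n$ meeting $E(G)$ is $\sum_{|F|=1}|W_F|-\sum_{|F|=2}|W_F|+\sum_{|F|=3}|W_F|-\sum_{|F|=4}|W_F|$, where $W_F$ is the set of $4$-walks of $K_n$ containing all edges of $F$; since $W_F$ is nonempty only when $F$ is a single edge, two edges (adjacent or disjoint), a $3$-edge path, or a $4$-cycle, the five coefficients come from computing $|W_F|$ for each of these five shapes. You instead expand $\mathrm{tr}\bigl((B-A)^4\bigr)$ with $B=J-I$ into six cyclic trace types, evaluate them via $B^2=(n-2)J+I$, $B^3=(n^2-3n+3)J-I$ and $J=\mathbf{1}\mathbf{1}^\top$, and then convert $\sum_i d_i^2$, $\mathbf{1}^\top A^3\mathbf{1}$, $\mathrm{tr}(A^4)$ and $m^2$ into the prescribed subgraph counts. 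All of your intermediate identities are right ($\sum_i d_i^2=2m+2m_1$, $\mathrm{tr}(A^4)=2m+4m_1+8m_4$, $\mathbf{1}^\top A^3\mathbf{1}=2m+4m_1+6t+2m_3$, $m^2=m+2m_1+2m_2$), and substituting them does collect to exactly $W_n-(8n^2-32n+34)m+(8n-20)m_1+16m_2-8m_3+8m_4$, with the triangle count cancelling between $\mathbf{1}^\top A^3\mathbf{1}$ and $\mathrm{tr}(A^3)$ and with $m^2$ eliminated, so the argument is complete. As for what each approach buys: the paper's edge-subset count makes the qualitative claim transparent from the outset (a subgraph type can only matter if it fits inside a $4$-walk) and yields each coefficient by a single direct count with no algebra; your trace expansion is more mechanical and self-verifying (e.g.\ $W_n=(n-1)^4+(n-1)$ falls out of the eigenvalues of $J-I$, and your suggested sanity check against $K_n$ is easy to run) and it generalizes routinely to closed $k$-walks of the complement for any $k$, at the price of concentrating all the combinatorial delicacy in the classification of degenerate $3$- and $4$-walks, which you carried out correctly.
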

\begin{proof}
The result is a consequence of the inclusion-exclusion principle.
Assume $G$ and $K_n$ have the same vertex set.
Let $E$ be the edge set of $G$.
For a subset $F \subset E$, let $W_F$ denote the set of $4$-walks in $K_n$ containing all edges of $F$.
Then the total numbers of $4$-walks in $K_n$ that contain at least one edge from $E$ equals
\[
|\bigcup_{|F|\geq 1} W_F| =
\sum_{|F|=1}|W_F| - \sum_{|F|=2}|W_F| + \sum_{|F|=3}|W_F| - \sum_{|F|=4}|W_F| \, .
\]
If $|F|=1$, then $|W_F|=8(n-2)(n-3)+8(n-2)+2$.
If $|F|=2$, then $|W_F|$ depends on the mutual position of the two edges.
If they have a vertex in common, then $|W_F| = 8(n-3)+4$, and if the two edges are
independent then $|W_F| = 16$.
Suppose $|F|=3$, then the three edges are a path in $G$, and there are $8$ different
$4$-walks in $K_n$ containing these edges.
Finally, if $|F|=4$, then the edges are a cycle of length $4$ in $G$.
Each of them leads to $8$ different 4-walks.
\end{proof}
\begin{theorem}\label{del5}
Let $G$ be a graph obtained by removing five or fewer edges from $K_n$, then $G$ is DS,
except if $G=K_7\sm(K_4 \sm  K_2)$ or $K_7\sm(K_{1,4}+K_2)$.
\end{theorem}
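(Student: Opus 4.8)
The plan is to pass to complements. Suppose $G'$ is cospectral with $G=K_n\setminus H$, where $H=\overline G$ has at most five edges. Cospectral graphs have the same number of vertices and the same number of edges, so $H'=\overline{G'}$ is again a graph on $n$ vertices with the same number of edges as $H$, hence with at most five edges. The task therefore reduces to a finite one: to determine all pairs $H,H'$ of graphs with equally many edges, at most five, for which $K_n\setminus H$ and $K_n\setminus H'$ are cospectral, and to show that $H\cong H'$ unless we are in the exceptional situation.

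The invariants I would use are the numbers of closed $\ell$-walks, which are determined by the spectrum and hence agree for $G$ and $G'$. Applying Lemma~\ref{lem: triangles in complement} and Lemma~\ref{lem: 4-walks in the complement} with their ``$G$'' taken to be $H$ (so that the complement appearing there is our $G=\overline H$), the numbers of closed $2$-, $3$- and $4$-walks of $G$ become explicit expressions in $n$ and the subgraph counts of $H$. The $2$-walk count yields only $e(H)=e(H')$. The $3$-walk (triangle) count yields $p(H)-k(H)=p(H')-k(H')$, where $p$ counts $P_3$'s and $k$ counts triangles. The $4$-walk count supplies one further relation, a combination of the $P_3$, $2K_2$, $P_4$ and $C_4$ counts with coefficients depending on $n$. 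Should these fail to decide a case, the number of closed $5$-walks gives yet another relation.

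I would then enumerate the graphs with at most five edges (there are only finitely many once isolated vertices are ignored) and compute the above invariants for each. For up to four edges the relations separate all non-isomorphic graphs, so $G$ is DS. The delicate part is five edges, and the main obstacle is to show that the only coincidence among the available invariants that yields genuinely cospectral graphs is the pair $H=K_4\setminus K_2$ (the diamond) and $H=K_{1,4}+K_2$. These share $e=5$ and $p-k=6$, yet their $P_3$ counts differ by $2$; the $4$-walk relation then reduces to an $n$-dependent discrepancy which a short computation shows to equal $16n-112$, and this vanishes exactly at $n=7$. For every other $n$ the two graphs have different $4$-walk counts and are thereby distinguished, while at $n=7$ one checks directly that $K_7\setminus(K_4\setminus K_2)$ and $K_7\setminus(K_{1,4}+K_2)$ are indeed cospectral, giving the stated exception.

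Confirming that no \emph{other} pair of five-edge graphs collides on all the walk invariants for some $n$ is the bulk of the work, and I expect it to be the hardest step: it is an exhaustive case analysis in which, whenever the $3$- and $4$-walk relations alone leave an ambiguity, one must either invoke the $5$-walk count or compute the relevant spectra outright to rule out accidental cospectrality.
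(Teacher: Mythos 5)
Your strategy coincides with the paper's: pass to complements, enumerate the graphs with at most five edges, and separate the resulting graphs $K_n \sm H$ by triangle and $4$-walk counts via Lemmas~\ref{lem: triangles in complement} and~\ref{lem: 4-walks in the complement}. Your treatment of the exceptional pair is correct, and in fact differs from (and for that case is arguably cleaner than) the paper's: for $H_1=K_4\sm K_2$ and $H_2=K_{1,4}+K_2$ the triangle counts of the complements agree for all $n$, while the $4$-walk counts differ by $16n-112$, which vanishes only at $n=7$; the paper instead distinguishes them for $n>7$ by $\mathrm{rank}(A+I)$, that is, by the multiplicity of the eigenvalue $-1$.

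The genuine gap is this: there is a \emph{second} colliding pair among the $26$ five-edge graphs which you never identify, namely $K_n\sm P_6$ and $K_n\sm(C_4+K_2)$, and your primary tools provably cannot handle it. Both $P_6$ and $C_4+K_2$ have five edges, $\sum_i d_i^2=18$ and no triangles, so by Lemma~\ref{lem: triangles in complement} the complements have equal triangle numbers for all $n$; moreover both have $m_1=4$ and $m_2=6$, and the combination $-8m_3+8m_4$ equals $-24$ in both cases ($m_3=3$, $m_4=0$ versus $m_3=4$, $m_4=1$), so by Lemma~\ref{lem: 4-walks in the complement} the complements also have equal $4$-walk counts for \emph{every} $n$ --- not, as for your diamond pair, for just one value of $n$. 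Hence the $3$- and $4$-walk relations can never settle this case, and the fallback you mention only in passing (closed $5$-walks or direct spectral computation) must actually be carried out: it does work (an inclusion--exclusion computation in the style of Lemma~\ref{lem: 4-walks in the complement} shows the closed-$5$-walk counts of the two complements differ by $10n-40\neq 0$; the paper instead observes that the multiplicities of the eigenvalue $-1$ differ for all $n\geq 6$), but you neither locate the pair nor perform any such computation. More broadly, you explicitly defer the exhaustive verification that no other five-edge pair collides for some $n$, calling it ``the bulk of the work''; since that finite check is precisely where the content of the theorem lies --- and where this second pair hides --- what you have written is a sound plan rather than a proof, and its one concrete prediction about the five-edge case (that the diamond pair is the only coincidence among the triangle and $4$-walk invariants) is false.
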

\begin{proof}
If we remove one, two or three edges the result follows from Proposition~\ref{match}, Theorem~\ref{complete} and
Corollary~\ref{completebipartite} with the exception of $K_n \sm  P_4$ and $K_n \sm  (P_3+K_2)$,
but these two graphs have different numbers of triangles, and therefore different spectrum.

In the case of 4 edges, we obtain eleven different graphs.
By use of Lemmas~\ref{lem: triangles in complement} and \ref{lem: 4-walks in the complement}
it follows straightforwardly that they can be distinguished by the number of triangles or $4$-walks.
Therefore the eleven graphs have different spectra.

There are 26 different graphs obtained from removing five edges from the complete graph.
Again we use Lemmas~\ref{lem: triangles in complement} and \ref{lem: 4-walks in the complement},
and find that just two pairs have the same number of triangles and $4$-walks.
These pairs are: $\{K_n\sm(K_4 \sm K_2)\, ,\, K_n\sm(K_{1,4}+K_2)\}$ and $\{K_n\sm P_6\, ,\, K_n\sm(C_4+K_2)\}$.
The graphs $K_7 \sm (K_4 \sm  K_2)$ and $K_7 \sm (K_{1,4}+K_2)$ are cospectral with spectrum $\{-1^3,-2,0,3\pm\sqrt{6}\}$.
For $n >7$, the adjacency matrix $A$ of $K_n\sm(K_4 \sm K_2)$ satisfies rank$(A+I)=n-4$,
whilst rank$(A'+I)=n-5$ for the adjacency matrix $A'$ of $K_n\sm(K_{1,4}+K_2)$.
Thus the two graphs have different multiplicities for the eigenvalue $-1$.
Similarly, $K_n\sm P_6$ and $K_n \sm (C_4+K_2)$ have different multiplicities for the eigenvalue $-1$ for all $n\geq 6$.
\end{proof}
\\[5pt]
We saw that the nonisomorphic cospectral graphs mentioned in the above theorem have spectrum
$\{-1^3,-2,0,3\pm\sqrt{6}\}$.
This pair is the promised example of graphs with an eigenvalue $-1$ of multiplicity $n-4$, which are not DS.
In the next section we shall see that for every $n\geq 7$ cospectral graphs exist that can be obtained
from $K_n$ by deleting $6$ edges.


\section{\R-cospectral graphs}

Two graphs with adjacency matrices $A$ and $B$ are called {\em $\R$-cospectral} if $A+\alpha J$ is cospectral with
$B+\alpha J$ for every $\alpha\in\R$.
Clearly $\R$-cospectral graphs are cospectral ($\alpha=0$) and have cospectral complements ($\alpha=-1$).
Johnson and Newman~\cite{JN80} (see also \cite{DHK07}) proved the following theorem
(an orthogonal matrix $U$ is {\em regular} if $U J = J$).
\begin{theorem}\label{thm: graphs and complements cospectral}
For graphs $G$ and $H$ with adjacency matrices $A$ and $B$, respectively, the following are equivalent:
\begin{enumerate}[(a)]
\item $G$ and $H$ are $\R$-cospectral,
\item $G$ and $H$ are cospectral, and so are their complements,
\item $ B =U^\top A U$ for some regular orthogonal matrix $U$.
\end{enumerate}
\end{theorem}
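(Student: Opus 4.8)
The plan is to prove the three statements equivalent as a cycle, say (c)$\Rightarrow$(a)$\Rightarrow$(b)$\Rightarrow$(c), after first isolating one structural fact that carries most of the weight. Since $J=\mathbf 1\mathbf 1^\top$ has rank one, the matrix determinant lemma gives
\[
\det(xI-A-\alpha J)=\det(xI-A)-\alpha\,\mathbf 1^\top\mathrm{adj}(xI-A)\,\mathbf 1=p_A(x)-\alpha\, q_A(x),
\]
an \emph{affine} function of $\alpha$, where $p_A(x)=\det(xI-A)$ and $q_A(x)=\mathbf 1^\top\mathrm{adj}(xI-A)\,\mathbf 1$ (and $p_B,q_B$ denote the analogues for $H$). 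I would phrase everything in terms of comparing the pairs $(p_A,q_A)$ and $(p_B,q_B)$.

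The two easy implications come first. For (c)$\Rightarrow$(a): if $B=U^\top AU$ with $U$ orthogonal and regular, then $U\mathbf 1=\mathbf 1$, hence also $U^\top\mathbf 1=\mathbf 1$ and $U^\top JU=(U^\top\mathbf 1)(\mathbf 1^\top U)=J$, so $U^\top(A+\alpha J)U=B+\alpha J$ for every $\alpha$; orthogonal similarity preserves spectra, which is (a). For (a)$\Rightarrow$(b) I specialize $\alpha=0$ (cospectrality of $G$ and $H$) and $\alpha=-1$: the complement has adjacency matrix $J-I-A$, whose spectrum is obtained from that of $A-J$ by $\mu\mapsto-\mu-1$, so cospectrality of the complements is exactly $\det(xI-A+J)=\det(xI-B+J)$, i.e. the value of the affine family at $\alpha=-1$. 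I would note in passing that, because the family is affine in $\alpha$, agreement at the two values $\alpha=0,-1$ already forces agreement for all $\alpha$; thus (a) and (b) are equivalent almost for free, and the genuine content of (b) is precisely $p_A=p_B$ together with $q_A=q_B$.

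The real work is (b)$\Rightarrow$(c). From $p_A=p_B$ and $q_A=q_B$ I obtain the equality of generating functions
\[
\mathbf 1^\top(xI-A)^{-1}\mathbf 1=\frac{q_A(x)}{p_A(x)}=\frac{q_B(x)}{p_B(x)}=\mathbf 1^\top(xI-B)^{-1}\mathbf 1,
\]
whose Laurent expansion yields $\mathbf 1^\top A^k\mathbf 1=\mathbf 1^\top B^k\mathbf 1$ for all $k$, i.e. $G$ and $H$ have equally many walks of each length. Writing the spectral decompositions $A=\sum_i\lambda_iE_i^A$ and $B=\sum_i\lambda_iE_i^B$ (the same distinct eigenvalues $\lambda_i$ with the same multiplicities, since $p_A=p_B$), these walk counts read $\sum_i\lambda_i^k\,\|E_i^A\mathbf 1\|^2$ and $\sum_i\lambda_i^k\,\|E_i^B\mathbf 1\|^2$; as the $\lambda_i$ are distinct, a Vandermonde argument forces $\|E_i^A\mathbf 1\|=\|E_i^B\mathbf 1\|$ for every $i$. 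I would then build $U$ eigenspace by eigenspace: the spaces $V_i^A=\mathrm{im}\,E_i^A$ and $V_i^B=\mathrm{im}\,E_i^B$ have equal dimension and contain the projected vectors $E_i^A\mathbf 1,\ E_i^B\mathbf 1$ of equal norm, so there is an isometry $U_i\colon V_i^A\to V_i^B$ with $U_iE_i^A\mathbf 1=E_i^B\mathbf 1$ (choose it sending a unit vector to a unit vector and extend, taking $U_i$ arbitrary when both projections vanish). Assembling $U=\bigoplus_iU_i$ over the mutually orthogonal, spanning eigenspaces produces an orthogonal matrix with $BU=UA$, hence $B=UAU^\top$, and with $U\mathbf 1=\sum_iU_iE_i^A\mathbf 1=\sum_iE_i^B\mathbf 1=\mathbf 1$; replacing $U$ by $U^\top$ puts this in the required form $B=U^\top AU$ with $U$ regular.

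I expect the main obstacle to be exactly this last implication (b)$\Rightarrow$(c): the crux is recognizing that the invariants hidden inside $\R$-cospectrality are the squared projection lengths $\|E_i\mathbf 1\|^2$ of the all-ones vector onto the eigenspaces, and that these, together with the matching eigenvalues, are precisely what is needed to glue the local isometries $U_i$ into a single regular orthogonal $U$. Once the rank-one determinant identity and the walk-count extraction are in place, the remaining steps are short and essentially bookkeeping.
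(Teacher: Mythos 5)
Your proof is correct in every step, but note that the paper does not actually prove this theorem: it is quoted as a known result of Johnson and Newman \cite{JN80} (see also \cite{DHK07}), so there is no internal proof to compare against. Your argument is essentially the standard one from that literature. The rank-one determinant identity makes $\det(xI-A-\alpha J)=p_A(x)-\alpha q_A(x)$ affine in $\alpha$, which correctly reduces both (a) and (b) to the pair of equalities $p_A=p_B$, $q_A=q_B$; the implication (c)$\Rightarrow$(a) via $U^\top JU=J$ is routine; and in (b)$\Rightarrow$(c) you correctly identify the hidden invariants as the quantities $\|E_i\mathbf 1\|^2$ (the ``main angles'' of the graph), extract them from the walk generating function $\mathbf 1^\top(xI-A)^{-1}\mathbf 1=q_A(x)/p_A(x)$ by a Vandermonde argument (valid since the $\lambda_i$ are distinct), and glue the eigenspace isometries $U_i$ into a single orthogonal $U$ satisfying $UA=BU$ and $U\mathbf 1=\mathbf 1$, including the degenerate case where both projections of $\mathbf 1$ vanish. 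One could quibble that passing from $B=UAU^\top$ to the stated form $B=U^\top AU$ deserves the one-line remark you give (that $U^\top$ is again regular orthogonal), but you do give it. The proof is complete and self-contained, and would serve as a legitimate replacement for the citation.
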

Clearly any pair of regular cospectral graphs is $\R$-cospectral.
Also graphs related by Godsil-McKay switching are $\R$-cospectral (see \cite{GM76}, \cite{DH03}).
Figure~1 presents a pair of nonisomorphic $\R$-cospectral graphs.
\begin{center}
\begin{tabular}{c}
\begin{tabular}{cc}
\setlength{\unitlength}{3pt}
\begin{picture}(26,26)(-4,-13)
\put(0,4.5){\circle*{2}} \put(0,-4.5){\circle*{2}}
\put(9,9){\circle*{2}} \put(9,-9){\circle*{2}}
\put(18,4.5){\circle*{2}} \put(18,-4.5){\circle*{2}}
\put(9,0){\circle*{2}} \put(0,4.5){\line(2,1){9}}
\put(0,-4.5){\line(2,-1){9}} \put(9,9){\line(2,-1){9}}
\put(9,-9){\line(2,1){9}} \put(0,-4.5){\line(0,1){9}}
\put(18,-4.5){\line(0,1){9}}
\end{picture}
 &\hspace{10pt}
\setlength{\unitlength}{3pt}
\begin{picture}(26,26)(-4,-10)
\put(-3,-6){\circle*{2}} \put(9,12){\circle*{2}}
\put(9,6){\circle*{2}} \put(3,-3){\circle*{2}}
\put(15,-3){\circle*{2}} \put(21,-6){\circle*{2}}
\put(9,0){\circle*{2}} \put(-3,-6){\line(2,1){12}}
\put(21,-6){\line(-2,1){12}} \put(9,0){\line(0,1){12}}
\end{picture}
\end{tabular}
\\
~~~Figure~1: Two $\R$-cospectral graphs
\end{tabular}
\end{center}
Consider graphs $G$ and $H$ with disjoint vertex sets.
The graph obtained from $G$ and $H$ by introducing all possible edges between a vertex of $G$ and a vertex of $H$
is called the {\em join} of $G$ and $H$, and denoted by $G \vee H$.
Thus, the complement of the join of $G$ and $H$ is the disjoint union of their complements.
\begin{theorem}
If $\{G_1,G_2\}$ and $\{H_1,H_2\}$ are two pairs of $\R$-cospectral graphs, then so are
$\{ G_1 \vee H_1 \, , \, G_2 \vee H_2 \}$ and $\{ G_1 + H_1 \, , \, G_2 + H_2 \}$.
\end{theorem}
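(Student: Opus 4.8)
The plan is to deduce everything from the Johnson--Newman characterization in Theorem~\ref{thm: graphs and complements cospectral}. Write $A_1,A_2$ for the adjacency matrices of $G_1,G_2$ and $C_1,C_2$ for those of $H_1,H_2$. By the hypothesis and the equivalence (a)$\Leftrightarrow$(c) of that theorem, there are regular orthogonal matrices $U$ and $V$ with $A_2=U^\top A_1 U$ and $C_2=V^\top C_1 V$. The single object that does all the work is the block-diagonal matrix
\[
W=\left[\begin{array}{cc} U & O \\ O & V \end{array}\right].
\]
I would first check that $W$ is again regular orthogonal. Orthogonality is immediate from that of $U$ and $V$, and regularity follows from $WJ=J$: multiplying out blockwise, each block of $WJ$ has the form $UJ$ or $VJ$, and these equal $J$ because $U$ and $V$ are regular.

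The second step is the uniform block computation
\[
W^\top\left[\begin{array}{cc} A_1 & M \\ M^\top & C_1 \end{array}\right]W
=\left[\begin{array}{cc} U^\top A_1 U & U^\top M V \\ V^\top M^\top U & V^\top C_1 V \end{array}\right],
\]
applied once with $M=O$ (the disjoint union $G_1+H_1$) and once with $M=J$ (the join $G_1\vee H_1$). The diagonal blocks become $A_2$ and $C_2$ by construction. The off-diagonal blocks vanish trivially when $M=O$, while for $M=J$ one uses the elementary fact that a regular orthogonal matrix satisfies $U^\top J=J=JV$: from $UJ=J$ and orthogonality, $U^\top J=U^{-1}(UJ)=J$, and symmetrically $JV=J$, so $U^\top J V=J$ and likewise $V^\top J U=J$. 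Hence in both cases the conjugate of the adjacency matrix of $G_1+H_1$ (respectively $G_1\vee H_1$) by the regular orthogonal matrix $W$ is exactly the adjacency matrix of $G_2+H_2$ (respectively $G_2\vee H_2$). Applying (c)$\Rightarrow$(a) of Theorem~\ref{thm: graphs and complements cospectral} once more then gives both $\R$-cospectralities.

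There is no serious obstacle once the regular-orthogonal characterization is in hand; the only points demanding care are the verification that $W$ is regular and that the all-ones off-diagonal block is preserved, both of which reduce to the identity $U^\top J=J$. As an alternative route for the join one could instead observe that complementation sends $G\vee H$ to $\overline{G}+\overline{H}$ and preserves $\R$-cospectrality (condition (b) is symmetric under complementation), so the join statement would follow formally from the disjoint-union statement; but the single block computation above dispatches both simultaneously, so I would present it that way.
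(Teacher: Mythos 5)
Your proof is correct and follows essentially the same route as the paper: both conjugate the block adjacency matrix $\left[\begin{smallmatrix} A_1 & M \\ M^\top & C_1 \end{smallmatrix}\right]$ by the block-diagonal regular orthogonal matrix $\mathrm{diag}(U,V)$ and invoke the Johnson--Newman equivalence. The only differences are cosmetic: the paper treats the join and remarks that this suffices (the disjoint union following by complementation), whereas you handle both cases uniformly and spell out the verifications (regularity of $W$, $U^\top J V = J$) that the paper leaves implicit.
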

\begin{proof}
Clearly it suffices to prove only the first claim.
Let $A_1$, $A_2$, $B_1$, $B_2$ be the adjacency matrices of $G_1$, $G_2$, $H_1$, $H_2$, respectively,
and let $U$ and $V$ the regular orthogonal matrices for which $U^\top A_1 U = A_2$ and $V^\top B_1 V = B_2$.
Then
\[
\left[\begin{array}{cc} U & O \\O & V \end{array}\right]^\top
\left[\begin{array}{cc} A_1 & J \\J & B_1 \end{array}\right]
\left[\begin{array}{cc} U & O \\O & V \end{array}\right] =
\left[\begin{array}{cc} A_2 & J \\J & B_2 \end{array}\right].
\]
Therefore $G_1 \vee H_1$ is $\R$-cospectral with $G_2 \vee H_2$.
\end{proof}
\\[5pt]
In particular, if we take $H_1=H_2=K_m$ we find the following.
\begin{corollary}\label{cor: cospectral with isolated vertices}
If $G_1$ and $G_2$ are $\R$-cospectral, then so are $K_n \sm G_1$ and $K_n \sm G_2$.
\end{corollary}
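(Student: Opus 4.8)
The plan is to obtain this as a short corollary of the preceding join theorem, once I rewrite $K_n \sm G_i$ as a join and exploit the fact that $\R$-cospectrality is stable under complementation. Throughout, let $k$ be the common number of vertices of $G_1$ and $G_2$ (cospectral graphs have equally many vertices), and assume $k \leq n$ so that $K_n \sm G_i$ is defined. The whole argument is then a matter of applying Theorem~\ref{thm: graphs and complements cospectral} and the join theorem to the right graphs.

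First I would record that $\R$-cospectrality is a self-complementary property. By characterization (b) of Theorem~\ref{thm: graphs and complements cospectral}, two graphs are $\R$-cospectral precisely when they are cospectral and so are their complements, and this condition is manifestly symmetric under complementation; equivalently, from (c), if $B=U^\top A U$ with $U$ regular orthogonal then $U^\top J U = J$ and $U^\top I U = I$ give $U^\top(J-I-A)U = J-I-B$, so the complements are related by the same $U$. Hence $G_1,G_2$ being $\R$-cospectral forces $\overline{G_1}$ and $\overline{G_2}$ to be $\R$-cospectral, with complements taken on the common $k$-vertex set. The second ingredient is the structural identity that deleting the edges of $G_i$ from $K_n$ yields a join: if $G_i$ is placed on $k$ of the $n$ vertices, then after removing its edges those $k$ vertices carry $\overline{G_i}$, the other $n-k$ vertices still form a clique $K_{n-k}$, and all edges between the two parts survive, so
\[
K_n \sm G_i = \overline{G_i} \vee K_{n-k}.
\]
Now I apply the preceding theorem to the $\R$-cospectral pair $\{\overline{G_1},\overline{G_2}\}$ and the trivially $\R$-cospectral pair $\{K_{n-k},K_{n-k}\}$ (this is exactly the instance $H_1=H_2=K_{n-k}$), concluding that $\overline{G_1}\vee K_{n-k}$ and $\overline{G_2}\vee K_{n-k}$ are $\R$-cospectral; by the displayed identity these are $K_n \sm G_1$ and $K_n \sm G_2$, which proves the claim.

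I do not expect any substantive obstacle, since this is a corollary; the only thing to get right is the bookkeeping. The delicate point is that the join theorem must be applied to the \emph{complements} $\overline{G_i}$ rather than to $G_1,G_2$ directly, which is why the complement-invariance step must come first. As a consistency check one can run the dual route: applying the theorem directly to $\{G_1,G_2\}$ and $\{K_m,K_m\}$ shows $G_1\vee K_m$ and $G_2\vee K_m$ are $\R$-cospectral, and their complements are $\overline{G_i}+mK_1$, i.e.\ $\overline{G_i}$ with $m$ added isolated vertices; complement-invariance then returns the same conclusion, which is the sense in which the statement concerns cospectral graphs with isolated vertices.
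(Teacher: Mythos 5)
Your proof is correct and follows essentially the same route the paper intends: the paper derives this corollary directly from the join theorem by taking $H_1=H_2=K_m$, which is exactly your application of that theorem to the $\R$-cospectral pair $\{\overline{G_1},\overline{G_2}\}$ (using complement-invariance from Theorem~\ref{thm: graphs and complements cospectral}) joined with $K_{n-k}$, via the identity $K_n \sm G_i = \overline{G_i} \vee K_{n-k}$. You have merely spelled out the bookkeeping that the paper leaves implicit, and your ``dual route'' consistency check is the same argument read through the disjoint-union half of the theorem.
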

This result also follows from Lemma~2.8 of \cite{WX07}.
If $G_1=C_6+K_1$ and $G_2$ is the other graph of Figure~1, we find that
$K_n\sm C_6$ is $\R$-cospectral with $K_n\sm G_2$ for every $n\geq 7$.
Nonisomorphic $\R$-cospectral graphs exist for every number of edges at least six
(for example, the pair of Figure~1 remains $\R$-cospectral it in both graphs the endpoint of a
path $P_\ell$ is attached to a vertex of degree~2).
Therefore, for every $m\geq 6$ and every large enough $n$ one can obtain graphs which are not DS
by removing $m$ edges from $K_n$.

\section{Removing a path}

We expect that $K_n \sm P_\ell$ is DS for every $n\geq \ell\geq 2$.
By Theorem~\ref{del5} and \cite{DH02} it is true if $\ell\leq 6$
and if $n=\ell$.
Unfortunately we where not able to prove it in general.
The problem seems hard.
A reason for this may be that the complement $P_\ell+(n-\ell)K_1$
is not DS if $\ell$ is odd and $5\leq\ell\leq n-1$.
Indeed, if $m\geq 2$, then $P_{2m+1} + K_1$ is cospectral and nonisomorphic with $P_m + Y_{m+2}$,
where $Y_{m+2}$ is the graph obtained by attaching two pendent edges to one endpoint of $P_m$; see~\cite{CSS10}.
However, the disjoint union of two or more nontrivial paths is DS
(a nontrivial path has at least one edge); see~\cite{DH03,DH09}.
Therefore, the spectral characterization of the complement of the disjoint union of paths
becomes easier when the paths are nontrivial.
This will be addressed in a forthcoming paper.

However, we are able to prove that $K_n \sm P_\ell$ is determined by its generalized spectrum, which
means that there exists no graph which is $\R$-cospectral and nonisomorphic with $K_n \sm P_\ell$.
\begin{theorem}
The graph $K_n \sm P_\ell$ is determined by its generalized spectrum.
\end{theorem}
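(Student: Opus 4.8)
The plan is to use Theorem~\ref{thm: graphs and complements cospectral}: a graph is determined by its generalized spectrum if and only if no nonisomorphic graph is $\R$-cospectral with it. So I must show that if $H$ is $\R$-cospectral with $G = K_n \sm P_\ell$, then $H \cong G$. The key reduction is to pass to complements. By the theorem, $H$ is $\R$-cospectral with $G$ precisely when $\overline{H}$ is $\R$-cospectral with $\overline{G} = P_\ell + (n-\ell)K_1$, so it suffices to prove that the disjoint union of a path with isolated vertices is determined by its generalized spectrum. This is the natural move because $\overline{G}$ has very few edges, making it far more tractable than $G$ itself, which is the whole point of working with the generalized spectrum rather than the plain spectrum.

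First I would record the spectral data that $\R$-cospectrality forces $\overline{H}$ to share with $\overline{G}$: the number of vertices $n$, the number of edges $\ell-1$, and (via Lemma~\ref{lem: triangles in complement} applied in both directions, or directly from the walk counts) the degree sequence information up to the moments $\sum d_i$ and $\sum d_i^2$. Since $\overline{G} = P_\ell + (n-\ell)K_1$ is triangle-free with exactly two vertices of degree $1$ and $\ell-2$ of degree $2$, these moments pin down that $\overline{H}$ is triangle-free with the same degree sequence. A triangle-free graph with $\ell-1$ edges whose degrees are $1,1,2,2,\dots,2$ and otherwise $0$ must be a disjoint union of paths and cycles together with isolated vertices; having exactly two degree-$1$ vertices forces exactly one path component plus possibly some cycles and isolated points.

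The heart of the argument is then to rule out the presence of cycle components, i.e. to show $\overline{H}$ is a single path plus isolated vertices. Here I would use the $4$-walk count (Lemma~\ref{lem: 4-walks in the complement}, again invoked through the complement relation), which is an extra generalized-spectral invariant distinguishing a long path from a path-plus-cycle configuration with the same edge count and degree sequence; the closed-$4$-walk count detects the two extra $4$-cycles (as walks) contributed by a genuine cycle component and the different count of $P_4$ subgraphs. Matching all of these invariants should force the cycle-free structure, leaving $\overline{H} = P_\ell + (n-\ell)K_1 = \overline{G}$, whence $H \cong G$.

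The main obstacle I expect is the last step: merely matching vertices, edges, degree moments, and triangles does not separate a path from configurations containing a cycle, so I must extract genuinely new information, and the natural source is the $4$-walk invariant of Lemma~\ref{lem: 4-walks in the complement}. The delicate point is that this invariant is preserved under $\R$-cospectrality (not just cospectrality), and confirming that the relevant subgraph counts $m_1,\dots,m_4$ are themselves generalized-spectral invariants — and that their combination strictly separates the path from every competing cycle-containing candidate on the same number of vertices and edges — is where the real work lies.
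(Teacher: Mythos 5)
Your opening reduction is the same as the paper's: by Theorem~\ref{thm: graphs and complements cospectral} it suffices to show that no graph is $\R$-cospectral and nonisomorphic with $\overline{G}=P_\ell+(n-\ell)K_1$, and your remark that $\sum_i d_i^2$ is a generalized-spectral invariant (apply Lemma~\ref{lem: triangles in complement} to the graph and to its complement) is correct. But both structural steps that follow have gaps, and the second is fatal. First, the data ``$n$ vertices, $\ell-1$ edges, triangle-free, $\sum_i d_i=2\ell-2$, $\sum_i d_i^2=4\ell-6$'' does \emph{not} pin down the degree sequence $1,1,2,\dots,2,0,\dots,0$: writing $x_i$ for the number of vertices of degree $i$, the moment equations give only $x_1-3x_3-8x_4-\cdots=2$, which permits $x_3=1$, $x_1=5$, etc. Concretely, $D_k+P_j+(n-\ell-1)K_1$, where $D_k$ is a path with one pendant edge attached at a vertex of degree $2$ and $k+j=\ell+1$, matches every invariant you list at this stage. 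The ingredient you are missing is the one the paper leans on: the classification of connected graphs with largest eigenvalue less than $2$ (each is a path or a path with one pendant edge attached at a vertex of degree $2$). That classification is what bounds the degrees by $3$, limits the exceptional components to these $D$-type trees, and --- crucially --- excludes cycle components outright, since a cycle has spectral radius $2$ while $\overline{G}$ has spectral radius $2\cos(\pi/(\ell+1))<2$.

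Second, the step you yourself flag as ``where the real work lies'' cannot be carried out with the invariants you propose. Take $\overline{H}=P_{\ell-j}+C_j+(n-\ell)K_1$ with $j\geq 5$ and $\ell-j\geq 2$. It has the same number of vertices, edges and triangles (zero) as $\overline{G}$, the same degree sequence, hence the same $m_1$ (number of $P_3$'s) and the same $m_2=\binom{m}{2}-m_1$; it has $m_4=0$; and it even has the same $m_3$, because the $P_4$-count of $P_{\ell-j}+C_j$ is $(\ell-j-3)+j=\ell-3$, exactly that of $P_\ell$. So the closed $4$-walk counts of the graphs agree, and by Lemma~\ref{lem: 4-walks in the complement} so do the $4$-walk counts of their complements: a cycle of length at least $5$ contributes neither extra $4$-cycles nor a deficit of $P_4$'s, contrary to your expectation. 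Of course $P_{\ell-j}+C_j$ is not actually cospectral with $P_\ell$, but detecting this needs information beyond walks of length $4$ (closed $5$-walks would see $C_5$, closed $6$-walks $C_6$, and so on --- an unbounded escalation), whereas the spectral-radius classification disposes of all cycles at once. Note finally that the paper does use Lemma~\ref{lem: 4-walks in the complement}, but only for the step where it genuinely works: eliminating the $D$-type tree components, whose $P_4$-counts really do differ from the path's (by $2$ per component, via the edge-sliding operation), forcing the complements to have different numbers of $4$-walks.
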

\begin{proof}
Let $G=P_\ell+(n-\ell)K_1$ (the complement of $K_n\sm P_\ell$).
Suppose $G'$ is cospectral with $G$.
The largest eigenvalue of $G$ (and $G'$) is strictly less than $2$.
Graphs with largest eigenvalue less than $2$ have been classified (see for example~\cite{CDS95}).
It is known that each component of such a graph is a path, or a path extended with one pendant edge
attached at a vertex of degree~$2$.
Let $x_i$ be the number of vertices in $G'$ with degree~$i$.
Then $x_i=0$ if $i\geq 4$.
We know that $G$ and $G'$ have the same number of edges and $4$-walks.
Therefore $\sum_i ix_i = 2\ell-2$, and $\sum_i i^2x_i = 4\ell-6$.
This gives $x_1-3x_3=2$, from which it follows that exactly one component of $G'$ is a nontrivial path.
So, if $G'$ is connected, then $G$ is isomorphic with $G'$.
Assume $G'$ is disconnected.
We decrease the number of components in $G'$ by removing the pendent edge in one component
(thus obtaining a second nontrivial path),
and inserting an edge between the endpoints of the two nontrivial paths.
This operation does not change the number of subgraphs $K_2+K_2$,
but it increases the number of subgraphs isomorphic to $P_4$.
After a number of such operations we obtain the graph $G$.
Now we apply Lemma~\ref{lem: 4-walks in the complement}.
With the notation of this lemma, the numbers $m$, $m_1$, $m_2$ and $m_4$ are the same for $G$ and $G'$.
But $m_3$ is different.
This implies that $G$ and $G'$ cannot have cospectral complements.~\end{proof}
\\[5pt]
{\large\bf Acknowledgement.}
We thank Andries E. Brouwer for pointing at the exception in Theorem~\ref{del5}.

\end{document}